\begin{document}
\title[ Multiple positive solutions ]{ Multiple positive solutions for a nonlinear
three-point integral boundary-value problem}
\author[F. Haddouchi, S. Benaicha]{Faouzi Haddouchi, Slimane Benaicha}
\address{Faouzi Haddouchi\\
Department of Physics, University of Sciences and Technology of
Oran, El Mnaouar, BP 1505, 31000 Oran, Algeria}
\email{fhaddouchi@gmail.com}
\address{Slimane Benaicha \\
Department of Mathematics, University of Oran, Es-senia, 31000 Oran,
Algeria} \email{slimanebenaicha@yahoo.fr}
\subjclass[2000]{34B15, 34C25, 34B18}
\keywords{Positive solutions; Three-point boundary value problems; multiple solutions; Fixed points; Cone.}

\begin{abstract}

We investigate the existence of positive solutions to the nonlinear
second-order three-point integral boundary value problem
\begin{equation*} \label{eq-1}
\begin{gathered}
{u^{\prime \prime }}(t)+f(t, u(t))=0,\ 0<t<T, \\
u(0)={\beta}u(\eta),\ u(T)={\alpha}\int_{0}^{\eta}u(s)ds,
\end{gathered}
\end{equation*}
where $0<{\eta}<T$, $0<{\alpha}< \frac{2T}{{\eta}^{2}}$,
$0<{\beta}<\frac{2T-\alpha\eta^{2}}{\alpha\eta^{2}-2\eta+2T}$ are
given constants. We establish the existence of at least three positive solutions by using the
Leggett–-Williams fixed-point theorem.
\end{abstract}

\maketitle
\numberwithin{equation}{section}
\newtheorem{theorem}{Theorem}[section]
\newtheorem{lemma}[theorem]{Lemma}
\newtheorem{definition}[theorem]{Definition}
\newtheorem{example}[theorem]{Example}
\newtheorem{remark}[theorem]{Remark}
\allowdisplaybreaks

\section{Introduction}
The study of the existence of solutions of multipoint boundary value
problems for linear second-order ordinary differential equations was
initiated by II'in and Moiseev \cite{Mois}. Then Gupta \cite{Gupt}
studied three-point boundary value problems for nonlinear
second-order ordinary differential equations. Since then, nonlinear
second-order three-point boundary value problems have also been
studied by several authors. We refer the reader to \cite{Chen, Guo, Good1, Good2, Good3, Good4, Good5,
Han, Liang1, Liang2, Li, Liu1, Liu2, Liu3, Ma1, Ma2, Ma3, Pang,
Sun1, Sun2, Tarib, Webb1, Webb2, Webb3, Webb4, Xu, Yang1, Yang2, Zhang} and the references therein.

This paper is a continuation of our study in \cite{Had1,Had2} and is concerned with the existence and multiplicity of positive solutions of the problem

\begin{equation} \label{eq-4}
{u^{\prime \prime }}(t)+f(t, u(t))=0,\ t\in(0,T),
\end{equation}
with the three-point integral boundary condition
\begin{equation} \label{eq-5}
u(0)={\beta}u(\eta),\ u(T)={\alpha}\int_{0}^{\eta}u(s)ds,
\end{equation}

Throughout this paper, we assume the following hypotheses:

\begin{itemize}
\item[(H1)]
 $f\in C([0, T]\times[0,\infty),[0,\infty))$ and $f(t, .)$ does not
vanish identically on any subset of $[0, T]$ with positive measure.

\item[(H2)] $\eta\in(0, T)$, $0<{\alpha}< \frac{2T}{{\eta}^{2}}$ and
$0<{\beta}<\frac{2T-\alpha\eta^{2}}{\alpha\eta^{2}-2\eta+2T}$.
\end{itemize}

In this paper, by using the Leggett–-Williams fixed-point theorem \cite{Leg}, we will show the existence of at least three positive solutions for a three-point integral boundary value
problem. Some papers in this area include  \cite{Luo, Ping, Xian, Ander1, Ander2, Avery, He, Agar}.

\section{Background and definitions}

The proof of our main result is based on the Leggett–-Williams fixed point
theorem, which deals with fixed points of a cone-preserving operator
defined on an ordered Banach space. For the convenience of the reader, we
present here the necessary definitions from cone theory in Banach spaces.

\begin{definition}
Let $E$ be a real Banach space. A nonempty closed convex set
$P\subset E $ is called a cone if it satisfies the following two conditions:\\
(i) $x\in P$, $\lambda\geq0$ implies $\lambda x\in P$;\\
(ii) $x\in P$, $-x\in P$ implies $x=0$.

Every cone $P\subset E$ induces an ordering in $E$ given by
$x\leq y $ if and only if $y-x \in E$.
\end{definition}

\begin{definition}
An operator is called completely continuous if it is continuous and
maps bounded sets into precompact sets.
\end{definition}

\begin{definition}
A map $\psi$ is said to be a nonnegative continuous concave functional
on a cone $P$ of a real Banach space $E$ if $\psi : P\rightarrow [0, \infty)$ is continuous and
\[\psi(tx+(1-t)y)\geq t\psi(x)+(1-t)\psi(y)\]
for all $x, y \in P$ and $t\in [0, 1]$. Similarly we say the map $\varphi$ is a nonnegative
continuous convex functional on a cone $P$ of a real Banach space $E$ if $\varphi : P\rightarrow [0, \infty)$ is continuous and
\[\varphi(tx+(1-t)y) \leq t\varphi(x)+(1-t)\varphi(y)\]
for all $x, y \in P$ and $t\in [0, 1]$.
\end{definition}

\begin{definition}
Let $\psi$  be a nonnegative continuous concave functional on the cone $P$. Define the convex sets $P_{c}$
and $P(\psi, a, b)$ by

\[P_{c}=\{x\in P: \|x\|<c\},\ \text{for}\ \ c>0\]

\[P(\psi, a, b)=\{x\in P: a\leq\psi(x), \|x\|\leq b\},\ \text{for}\ \ 0<a<b. \]
\end{definition}

Next we state the Leggett–-Williams fixed-point theorem.

\begin{theorem} [\cite{Leg}]\label{theo 1.1}

Let $A : \overline{P}_{c}\rightarrow \overline{P}_{c}$ be a completely continuous operator and let $\psi$ be a nonnegative continuous concave functional on $P$ such that $\|\psi(x)\|\leq \|x\|$ for all $x\in \overline{P}_{c}$ . Suppose that there exist $0 < a < b < d \leq c $ such that the following conditions hold,

\begin{itemize}
\item[(C1)] $\{x \in P(\psi, b, d):  \psi(x) > b\}\neq\emptyset \  \text{and} \ \psi(Ax) > b \ \text{for all}\ x \in P(\psi, b, d)$;

\item[(C2)] $\|Ax\| < a \  \text{for}\ \|x\|\leq a$;

\item[(C3)] $\psi(Ax) > b\ \text{for}\ x \in P(\psi, b, c)\  \text{with}\ \|Ax\| > d$.
\end{itemize}
Then $A$ has at least three fixed points $x_{1}$, $x_{2}$ and $x_{3}$ in $\overline{P}_{c}$ satisfying

$\|x_{1}\| < a$,  $\psi(x_{2}) > b$,  $a < \|x_{3}\|$ with  $\psi(x_{3})< b$.

\end{theorem}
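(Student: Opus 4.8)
The plan is to prove the theorem by fixed point index theory on the cone $P$, whose standard properties I will take as known: normalization, additivity (excision), homotopy invariance, and the solution property that $i(A,\Omega,P)\neq 0$ forces a fixed point of $A$ in $\Omega$. The three fixed points will be produced by exhibiting three pairwise disjoint relatively open subsets of $\overline{P}_{c}$ on each of which the index of $A$ is nonzero. Throughout I assume $A$ has no fixed point on the relevant boundaries, since any such point would already be a solution.

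First I would record two easy index computations. Because $A$ maps $\overline{P}_{c}$ into itself, a linear homotopy $h(s,x)=sAx+(1-s)x_{1}$ to a constant $x_{1}\in P_{c}$ is fixed-point-free on $\{\|x\|=c\}$ and gives $i(A,P_{c},P)=1$. Next, condition (C2) says $\|Ax\|<a$ whenever $\|x\|\le a$, so $A$ carries $\overline{P}_{a}$ into $P_{a}$ and the same homotopy-to-a-constant argument yields $i(A,P_{a},P)=1$.

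The crux is the middle region $U=\{x\in P:\ \|x\|<c,\ \psi(x)>b\}$, for which I aim to prove $i(A,U,P)=1$. By (C1) there is a point $u^{*}$ with $\psi(u^{*})>b$ and $\|u^{*}\|\le d$, so $u^{*}\in U$ (taking $d<c$; the case $d=c$ is similar). I would use the homotopy $H(s,x)=(1-s)Ax+s\,u^{*}$ and show it is fixed-point-free on $\partial U$, which splits into the face $\{\psi(x)=b,\ \|x\|\le c\}$ and the face $\{\|x\|=c\}$. On the first face, if $H(s,x)=x$ then concavity gives $\psi(x)\ge(1-s)\psi(Ax)+s\psi(u^{*})$, producing a contradiction with $\psi(x)=b$ as soon as $\psi(Ax)>b$. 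To secure $\psi(Ax)>b$ I would split on the size of $\|Ax\|$: if $\|Ax\|>d$, then $x\in P(\psi,b,c)$ and (C3) applies directly; if instead $\|Ax\|\le d$, the convex-combination bound $\|x\|\le(1-s)\|Ax\|+s\|u^{*}\|\le d$ puts $x\in P(\psi,b,d)$, so (C1) applies. On the second face, $\|H(s,x)\|\le(1-s)c+s\|u^{*}\|<c$ contradicts $\|x\|=c$ for $s>0$, and $s=0$ is excluded by the no-boundary-fixed-point assumption. Hence $A$ is admissibly homotopic on $U$ to the constant map $u^{*}$, giving $i(A,U,P)=1$.

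Finally I would assemble the pieces. The sets $P_{a}$ and $U$ are disjoint, since on $P_{a}$ one has $\psi(x)\le\|x\|<a<b$, and both lie inside $P_{c}$; additivity then gives
\[
i\big(A,\,P_{c}\setminus(\overline{P}_{a}\cup\overline{U}),\,P\big)=i(A,P_{c},P)-i(A,P_{a},P)-i(A,U,P)=1-1-1=-1 .
\]
The solution property yields fixed points $x_{1}\in P_{a}$ (so $\|x_{1}\|<a$), $x_{2}\in U$ (so $\psi(x_{2})>b$), and $x_{3}$ in the complementary region (so $a<\|x_{3}\|$ and $\psi(x_{3})<b$), and the disjointness of the three regions makes them distinct. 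I expect the main obstacle to be precisely the index computation on $U$: keeping the homotopy fixed-point-free on the face $\{\psi=b\}$ rests on the observation that $\|Ax\|\le d$ forces $\|x\|\le d$ through the convex combination, which is exactly what lets (C1) and (C3) jointly cover every case.
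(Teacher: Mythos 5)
The paper does not actually prove this statement: it is quoted (with the typo $\|\psi(x)\|\leq\|x\|$ for $\psi(x)\leq\|x\|$) from Leggett and Williams \cite{Leg}, so the only comparison available is with the proof in that reference, which is precisely the fixed-point-index argument you reconstruct. Your decomposition into three regions, the additivity computation $1-1-1=-1$, the use of (C2) to get $A(\overline{P}_{a})\subset P_{a}$, and above all the case split on $\|Ax\|$ --- invoking (C3) when $\|Ax\|>d$ and using the convex-combination bound $\|x\|\le(1-s)\|Ax\|+s\|u^{*}\|\le d$ to fall back on (C1) otherwise --- are exactly the right ingredients, and that case split is indeed the heart of the matter.

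The one genuine gap is your blanket assumption that $A$ has no fixed point on $\{\|x\|=c\}$ ``since any such point would already be a solution.'' One fixed point there is not the conclusion of the theorem, which asserts three fixed points with prescribed localizations; and if such a point exists, both $i(A,P_{c},P)$ and $i(A,U,P)$ are undefined, so neither of your homotopies can be run. (The analogous disclaimers on $\{\|x\|=a\}$ and $\{\psi(x)=b\}$ are harmless, since (C2) and your own case analysis genuinely exclude fixed points there.) The standard repair, used in \cite{Leg}, is to take the ambient retract to be $\overline{P}_{c}$ itself rather than $P$: then $\overline{P}_{c}$ is relatively open in itself with empty relative boundary, so $i(A,\overline{P}_{c},\overline{P}_{c})=1$ by normalization with no hypothesis at all, and the middle set $\{x\in\overline{P}_{c}:\psi(x)>b\}$ is relatively open with relative boundary contained in $\{\psi(x)=b\}$, exactly where your argument already works. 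With that change of ambient space your proof goes through; for the case $d=c$, note that you can always place a point in the middle region by replacing $u^{*}$ with $\lambda u^{*}$ for $\lambda<1$ close to $1$, since concavity and $0\in P$ give $\psi(\lambda u^{*})\ge\lambda\psi(u^{*})>b$ while $\|\lambda u^{*}\|<c$.
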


\section{Some preliminary results}
In order to prove our main result, we need some preliminary results. Let
us consider the following boundary value problem

\begin{equation}\label{eq-14}
{u^{\prime \prime }}(t)+y(t)=0, \ t\in (0,T),
\end{equation}
\begin{equation}\label{eq-15}
u(0)=\beta u(\eta ), \ u(T)=\alpha \int_{0}^{\eta }u(s)ds
\end{equation}

For problem \eqref{eq-14}-\eqref{eq-15}, we have the following conclusions which are derived
from \cite{Had1}.

\begin{lemma}[See \cite{Had1}]\label{lem 3.1}
Let $\beta \neq \frac{2T-\alpha \eta ^{2}}{\alpha \eta ^{2}-2\eta
+2T}$. Then for $y\in C([0,T],\mathbb{R})$, the problem \eqref{eq-14}-\eqref{eq-15} has
the unique solution
\begin{eqnarray*}
u(t)&=&\frac{\beta (2T-\alpha \eta ^{2})-2\beta (1-\alpha \eta
)t}{(\alpha \eta ^{2}-2T)-\beta (2\eta -\alpha \eta
^{2}-2T)}\int_{0}^{\eta }(\eta -s)y(s)ds \\
&&+\frac{\alpha \beta
\eta -\alpha (\beta -1)t}{(\alpha \eta ^{2}-2T)-\beta (2\eta -\alpha
\eta
^{2}-2T)}\int_{0}^{\eta }(\eta -s)^{2}y(s)ds \\
&&+\frac{2(\beta-1)t-2\beta \eta }{(\alpha \eta
^{2}-2T)-\beta (2\eta -\alpha \eta ^{2}-2T)}\int_{0}^{T}(T-s)y(s)ds-%
\int_{0}^{t}(t-s)y(s)ds.
\end{eqnarray*}
\end{lemma}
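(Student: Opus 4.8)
The plan is to solve the linear two-point-value problem \eqref{eq-14}-\eqref{eq-15} directly by integrating twice and then pinning down the two constants of integration from the boundary conditions. First I would integrate ${u''}(t) + y(t) = 0$ twice to obtain the general solution
\[
u(t) = -\int_{0}^{t}(t-s)y(s)\,ds + c_{1}t + c_{2},
\]
where $c_{1}, c_{2}$ are real constants to be determined. This already accounts for the final term $-\int_{0}^{t}(t-s)y(s)\,ds$ appearing in the claimed formula, so the entire content of the lemma is the evaluation of $c_{1}$ and $c_{2}$ as explicit coefficients of the three integral expressions $\int_{0}^{\eta}(\eta-s)y(s)\,ds$, $\int_{0}^{\eta}(\eta-s)^{2}y(s)\,ds$, and $\int_{0}^{T}(T-s)y(s)\,ds$.

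Next I would impose the two boundary conditions in \eqref{eq-15}. Evaluating the general solution gives $u(0) = c_{2}$, $u(\eta) = -\int_{0}^{\eta}(\eta-s)y(s)\,ds + c_{1}\eta + c_{2}$, and $u(T) = -\int_{0}^{T}(T-s)y(s)\,ds + c_{1}T + c_{2}$; for the integral condition I would also compute $\int_{0}^{\eta}u(s)\,ds$, where the double integral $\int_{0}^{\eta}\!\int_{0}^{s}(s-\tau)y(\tau)\,d\tau\,ds$ should be simplified by switching the order of integration to produce a single integral against the kernel $(\eta-s)^{2}$ (this is the source of the $\int_{0}^{\eta}(\eta-s)^{2}y(s)\,ds$ term). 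Substituting these into $u(0) = \beta u(\eta)$ and $u(T) = \alpha\int_{0}^{\eta}u(s)\,ds$ yields a $2\times 2$ linear system in $c_{1}$ and $c_{2}$ whose right-hand side is a combination of the three integrals.

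The remaining step is to solve that linear system by Cramer's rule. The common denominator $(\alpha\eta^{2}-2T) - \beta(2\eta - \alpha\eta^{2} - 2T)$ appearing in every coefficient of the stated formula is, up to sign and a factor, the determinant of the coefficient matrix, and the hypothesis $\beta \neq \frac{2T-\alpha\eta^{2}}{\alpha\eta^{2}-2\eta+2T}$ is precisely the condition guaranteeing this determinant is nonzero (equivalently, that the denominator does not vanish), so that $c_{1}$ and $c_{2}$ are uniquely determined. I would then collect the resulting coefficients of each of the three integrals into the forms displayed in the lemma and verify uniqueness follows automatically from the invertibility of the system.

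The main obstacle is bookkeeping rather than conceptual: the algebra of carrying $c_{1}$ and $c_{2}$ through the substitution and matching the final grouped coefficients is lengthy and error-prone, and the Fubini interchange that converts the iterated integral $\int_{0}^{\eta}\int_{0}^{s}(s-\tau)y(\tau)\,d\tau\,ds$ into $\tfrac{1}{2}\int_{0}^{\eta}(\eta-s)^{2}y(s)\,ds$ must be handled carefully so that the kernels and constant factors line up exactly with the claimed expression. Since the lemma is quoted from \cite{Had1}, I would present the computation compactly and rely on direct substitution back into \eqref{eq-14}-\eqref{eq-15} as an independent check that the stated $u(t)$ indeed satisfies both the differential equation and the boundary conditions.
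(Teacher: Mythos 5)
Your plan is correct: integrating twice, converting $\int_0^\eta\!\int_0^s(s-\tau)y(\tau)\,d\tau\,ds$ to $\tfrac12\int_0^\eta(\eta-s)^2y(s)\,ds$ by Fubini, and solving the resulting $2\times2$ system for $c_1,c_2$ (whose determinant is half the displayed denominator, nonzero exactly under the stated hypothesis on $\beta$) reproduces the formula coefficient by coefficient. The paper itself omits the proof and cites \cite{Had1}, but this is the standard derivation used there, so your approach is essentially the same.
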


\begin{lemma}[See \cite{Had1}]\label{lem 3.2}
Let $0<\alpha <\frac{2T}{\eta ^{2}}$, $\ 0\leq \beta <%
\frac{2T-\alpha \eta ^{2}}{\alpha \eta ^{2}-2\eta +2T}$. If $y\in C([0,T],[0,\infty
))$, then the unique solution $u$ of problem
\eqref{eq-14}, \eqref{eq-15} satisfies $\ u(t)\geq 0$ for $t\in [0,T]$.
\end{lemma}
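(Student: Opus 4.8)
The plan is to sidestep the closed-form expression from Lemma~\ref{lem 3.1} altogether and instead exploit the single qualitative fact that $u'' = -y \le 0$, so that $u$ is \emph{concave} on $[0,T]$. For a concave function the minimum over an interval is attained at an endpoint, and more precisely $u(t) \ge \frac{T-t}{T}u(0) + \frac{t}{T}u(T)$ for every $t \in [0,T]$. Hence it suffices to prove $u(0) \ge 0$ and $u(T) \ge 0$. Since $u(0) = \beta u(\eta)$ with $\beta \ge 0$, and since (as the argument will show) $u(T)\ge 0$ follows automatically once $u(\eta)\ge 0$, the entire problem reduces to the single claim $u(\eta) \ge 0$.

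To reach that claim I would extract two linear inequalities relating $u(\eta)$ and $u(T)$, using only concavity together with the boundary data. First, the trapezoidal estimate for concave functions gives $\int_0^\eta u(s)\,ds \ge \frac{\eta}{2}\big(u(0)+u(\eta)\big)$; substituting into $u(T) = \alpha\int_0^\eta u(s)\,ds$ and using $u(0)=\beta u(\eta)$ yields
\[
u(T) \ge \frac{\alpha\eta(\beta+1)}{2}\,u(\eta).
\]
Second, the chord inequality $u(\eta) \ge \frac{T-\eta}{T}u(0) + \frac{\eta}{T}u(T)$, again with $u(0)=\beta u(\eta)$, rearranges to $\big(T-(T-\eta)\beta\big)u(\eta) \ge \eta\,u(T)$. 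Here one must first settle the sign of $T-(T-\eta)\beta$: a short cross-multiplication shows that (H2) forces $\beta < \frac{2T-\alpha\eta^2}{\alpha\eta^2-2\eta+2T} < \frac{T}{T-\eta}$, so $T-(T-\eta)\beta > 0$ and the inequality may be divided to give $u(T) \le \frac{T-(T-\eta)\beta}{\eta}\,u(\eta)$.

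Chaining the two bounds produces $\big[\tfrac{\alpha\eta(\beta+1)}{2} - \tfrac{T-(T-\eta)\beta}{\eta}\big]u(\eta) \le 0$. The crucial observation — which I expect to be the real heart of the matter — is that the bracketed coefficient is \emph{strictly negative} precisely by hypothesis (H2): clearing denominators, $\frac{\alpha\eta(\beta+1)}{2} < \frac{T-(T-\eta)\beta}{\eta}$ is equivalent to $\beta(\alpha\eta^2-2\eta+2T) < 2T-\alpha\eta^2$, i.e.\ to $\beta < \frac{2T-\alpha\eta^2}{\alpha\eta^2-2\eta+2T}$. A negative coefficient multiplying $u(\eta)$ can be $\le 0$ only if $u(\eta) \ge 0$, which is exactly the claim. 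Once it holds the conclusion is immediate: $u(0) = \beta u(\eta) \ge 0$, the first displayed bound gives $u(T) \ge 0$, and concavity through $u(t) \ge \frac{T-t}{T}u(0)+\frac{t}{T}u(T)$ delivers $u(t) \ge 0$ for all $t\in[0,T]$. The only delicate points are verifying the positivity of $T-(T-\eta)\beta$ and recognizing that (H2) is exactly the inequality that makes the bracketed coefficient negative; everything else is routine.
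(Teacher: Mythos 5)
Your argument is correct, and every step checks out: the trapezoid bound $\int_0^\eta u \ge \tfrac{\eta}{2}(u(0)+u(\eta))$ and the chord bound at $t=\eta$ are both legitimate consequences of concavity regardless of the sign of $u$, the denominator $\alpha\eta^2-2\eta+2T=\alpha\eta^2+2(T-\eta)$ is positive so the cross-multiplications are safe, the inequality $\frac{2T-\alpha\eta^2}{\alpha\eta^2-2\eta+2T}<\frac{T}{T-\eta}$ does follow from $(2T-\alpha\eta^2)(T-\eta)-T(\alpha\eta^2+2(T-\eta))=-\alpha\eta^2(2T-\eta)<0$, and the bracketed coefficient being negative is indeed algebraically equivalent to (H2). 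Note, however, that this paper does not actually prove Lemma \ref{lem 3.2}; it is imported verbatim from \cite{Had1}, so there is no in-text proof to match. The proof in \cite{Had1} (following the Tariboon--Sitthiwirattham template) starts from the same concavity observation but is organized as a proof by contradiction: one assumes $u$ is negative somewhere, splits into cases according to the signs of $u(\eta)$ and $u(T)$, and rules each case out using the explicit representation from Lemma \ref{lem 3.1} together with the chord inequalities. Your version condenses all of that into a single chain of two linear inequalities in $u(\eta)$ and $u(T)$ whose incompatibility (unless $u(\eta)\ge 0$) is precisely (H2); this is cleaner, avoids the closed-form solution entirely, and makes transparent why the hypothesis on $\beta$ is exactly the right one. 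The only thing worth adding for completeness is a one-line justification that $u$ is genuinely concave (i.e., $u\in C^2$ with $u''=-y\le 0$), which is immediate from \eqref{eq-14}.
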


\begin{lemma}[See \cite{Had1}]\label{lem 3.3}
Let $0<\alpha <\frac{2T}{\eta ^{2}}$, $\ 0\leq\beta <%
\frac{2T-\alpha \eta ^{2}}{\alpha \eta ^{2}-2\eta +2T}$. If $y\in
C([0,T],[0,\infty ))$, then the unique solution $u$ of the problem
\eqref{eq-14}, \eqref{eq-15} satisfies
\begin{equation} \label{eq-16}
\min_{t\in [\eta,T]}u(t)\geq \gamma \|u\|,\ \|u\|=\max_{t\in
[0,T]}|u(t)|,
\end{equation}
where
\begin{equation} \label{eq-17}
\gamma:=\min\left\{\frac{\eta}{T},
\frac{\alpha(\beta+1)\eta^{2}}{2T},
\frac{\alpha(\beta+1)\eta(T-\eta)}{2T-\alpha(\beta+1)\eta^{2}}\right\}\in\left(0,1\right).
\end{equation}
\end{lemma}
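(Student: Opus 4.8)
The plan is to exploit the concavity of the solution. Since $u''(t)=-y(t)\le 0$ on $(0,T)$ (as $y\ge 0$ by hypothesis), $u$ is concave on $[0,T]$, and by Lemma \ref{lem 3.2} it is nonnegative. Two elementary consequences of concavity drive the whole argument: first, a concave function attains its minimum over the closed subinterval $[\eta,T]$ at one of the endpoints, so that $\min_{t\in[\eta,T]}u(t)=\min\{u(\eta),u(T)\}$; second, on any subinterval a concave function lies above the chord joining the endpoint values. It therefore suffices to bound both $u(\eta)$ and $u(T)$ below by $\gamma\|u\|$. Throughout I will write $\tau$ for a point at which $\|u\|=u(\tau)$ is attained, and split the analysis according to whether $\tau\ge\eta$ or $\tau<\eta$.

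First I would record a universal lower bound for the integral term. Applying the chord inequality on $[0,\eta]$ together with the boundary condition $u(0)=\beta u(\eta)$ gives
\[\int_{0}^{\eta}u(s)\,ds\ge\frac{\eta}{2}\bigl(u(0)+u(\eta)\bigr)=\frac{(\beta+1)\eta}{2}\,u(\eta),\]
hence $u(T)=\alpha\int_{0}^{\eta}u(s)\,ds\ge\tfrac{\alpha(\beta+1)\eta}{2}\,u(\eta)$; this holds regardless of the location of $\tau$.

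In the case $\tau\ge\eta$, concavity on $[0,\tau]$ together with $u(0)\ge 0$ yields $u(\eta)\ge\frac{\eta}{\tau}\|u\|\ge\frac{\eta}{T}\|u\|$, the first entry of $\gamma$. Feeding this into the integral estimate gives $u(T)\ge\frac{\alpha(\beta+1)\eta^{2}}{2T}\|u\|$, the second entry, so $\min\{u(\eta),u(T)\}\ge\gamma\|u\|$ here. The case $\tau<\eta$ is where $u$ is decreasing on all of $[\eta,T]$, so the minimum equals $u(T)$ and I must produce the third, more delicate entry of $\gamma$. The key extra ingredient is the chord bound on $[\tau,T]$ evaluated at $\eta$; a short computation shows the resulting expression is weakest when $\tau=0$, giving
\[u(\eta)\ge\frac{(T-\eta)\|u\|+\eta\,u(T)}{T}.\]
Combining this with the universal integral estimate produces a self-referential inequality $u(T)\ge\frac{\alpha(\beta+1)\eta}{2T}\bigl[(T-\eta)\|u\|+\eta\,u(T)\bigr]$, which I then solve for $u(T)$.

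The main obstacle is precisely this last step. The crude triangle bound $\int_{0}^{\eta}u\ge\frac{\eta}{2}\|u\|$ is not sharp enough: its equality case $u(\eta)=0$ is incompatible with the boundary condition $u(T)=\alpha\int_{0}^{\eta}u>0$, so the third entry of $\gamma$ can only emerge after the boundary condition is used to close the self-referential inequality. Isolating $u(T)$ requires $1-\frac{\alpha(\beta+1)\eta^{2}}{2T}>0$, i.e. $2T-\alpha(\beta+1)\eta^{2}>0$, which is exactly what the upper bound on $\beta$ in (H2) guarantees; this is the one place where that restriction is essential. Solving then yields $u(T)\ge\frac{\alpha(\beta+1)\eta(T-\eta)}{2T-\alpha(\beta+1)\eta^{2}}\|u\|$, the third entry, and collecting the cases gives $\min_{t\in[\eta,T]}u(t)\ge\gamma\|u\|$. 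Finally, since $\gamma\le\eta/T<1$ and every entry is positive under (H2), we have $\gamma\in(0,1)$.
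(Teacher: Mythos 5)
The paper does not reprove this lemma---it is quoted from \cite{Had1}---so there is no in-text proof to compare against; judged on its own, your argument is correct and is the standard one that the three entries of $\gamma$ are clearly engineered for: concavity plus the chord inequality on $[0,\eta]$ gives $u(T)\ge\frac{\alpha(\beta+1)\eta}{2}u(\eta)$, the case $\tau\ge\eta$ yields the first two entries, and the case $\tau<\eta$ yields the third via the self-referential inequality, which closes because (H2) forces $\beta<\frac{2T-\alpha\eta^2}{\alpha\eta^2-2\eta+2T}\le\frac{2T-\alpha\eta^2}{\alpha\eta^2}$ and hence $2T-\alpha(\beta+1)\eta^2>0$. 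The two steps you wave at (``a short computation shows the resulting expression is weakest when $\tau=0$'' and the verification of that sign condition) both check out, so the proof is complete in substance.
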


\section{Existence of triple solutions}

In this section, we discuss the multiplicity of positive solutions for the general
boundary-value problem \eqref{eq-4}, \eqref{eq-5}

In the following, we denote

\begin{gather}
\Lambda:=(2T-\alpha\eta^{2})-\beta(\alpha\eta^{2}-2\eta+2T), \label{eq-18}\\
m:=\Big(\frac{T^{2}(2T(\beta+1)+\beta\eta(\alpha
\eta+2)+\alpha\beta T^{2})}{2\Lambda}\Big) ^{-1},  \label{eq-19}\\
\delta :=\min\Big\{\frac{\beta\eta(T-\eta)^{2}}{\Lambda},
 \frac{\alpha\eta^{2}(1+\beta)(T-\eta)^{2}}{2\Lambda} \Big\}. \label{eq-20}
\end{gather}

Using Theorem \ref{theo 1.1}, we established the following existence theorem for the boundary-value
problem \eqref{eq-4}, \eqref{eq-5}.

\begin{theorem}\label{threesolution}
Assume {\rm (H1)} and {\rm (H2)} hold.
Suppose there exists constants $0<a<b<b/\gamma\le c$ such that
\begin{enumerate}
\item[(D1)] $f(t, u)<ma$ for  $t\in[0, T]$, $u\in[0, a]$;
\item[(D2)] $f(t, u)\ge\frac b\delta$  for $t\in[\eta, T]$,
$u\in[b, \frac b\gamma]$;
\item[(D3)] $f(t, u)\le mc$ for
$t\in[0, T]$, $u\in[0,c]$,
\end{enumerate}
where $\gamma, m, \delta$ are as defined in \eqref{eq-17},
\eqref{eq-19} and \eqref{eq-20}, respectively. Then the
boundary-value problem \eqref{eq-4}-\eqref{eq-5} has at least
three positive solutions $u_1, u_2$ and $u_3$ satisfying
$$
\|u_1\|<a,\quad\min_{t\in[0, T]}u_2(t)>b,\quad a<\|u_3\|\quad\text{with }
\min_{t\in[0, T]}u_3(t)<b.
$$
\end{theorem}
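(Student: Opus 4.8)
The plan is to realize positive solutions of \eqref{eq-4}--\eqref{eq-5} as fixed points of the integral operator associated with Lemma \ref{lem 3.1} and then to verify the three hypotheses of the Leggett--Williams theorem. I would work in the Banach space $E=C([0,T],\mathbb{R})$ with $\|u\|=\max_{t\in[0,T]}|u(t)|$, and define the cone
\[
P=\Big\{u\in E:\ u(t)\ge0\ \text{on}\ [0,T],\ \min_{t\in[\eta,T]}u(t)\ge\gamma\|u\|\Big\},
\]
together with the nonnegative continuous concave functional $\psi(u)=\min_{t\in[\eta,T]}u(t)$, so that $\psi(u)\le\|u\|$ for every $u\in P$. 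Let $A:P\to E$ be the operator whose value $Au(t)$ is the right-hand side of the formula in Lemma \ref{lem 3.1} with $y(s)$ replaced by $f(s,u(s))$; by construction a fixed point of $A$ is exactly a solution of \eqref{eq-4}--\eqref{eq-5}. First I would record that $A$ is completely continuous: continuity of $f$ gives continuity of $A$, and the explicit bounded kernel together with the Arzel\`a--Ascoli theorem gives that $A$ sends bounded sets to precompact sets.

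Next I would establish that $A$ maps $\overline{P}_c$ into $\overline{P}_c$. Lemma \ref{lem 3.2} gives $Au\ge0$ and Lemma \ref{lem 3.3} gives $\min_{t\in[\eta,T]}Au(t)\ge\gamma\|Au\|$, so $Au\in P$; it remains to control $\|Au\|$. The constant $m$ in \eqref{eq-19} is calibrated precisely so that, after bounding each kernel term in Lemma \ref{lem 3.1} and maximizing over $t\in[0,T]$, one obtains
\[
\|Au\|\le\frac1m\,\max_{s\in[0,T]}f(s,u(s)).
\]
Hence if $\|u\|\le c$ then $u(t)\in[0,c]$ and (D3) gives $\|Au\|\le\frac1m\,mc=c$, so $A(\overline{P}_c)\subset\overline{P}_c$. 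The very same estimate proves condition (C2): for $\|u\|\le a$ we have $u(t)\in[0,a]$, whence (D1) yields $\|Au\|<\frac1m\,ma=a$, the inequality being strict because (D1) is strict and the kernel does not vanish identically.

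The third step is to verify (C1) and (C3) with the choice $d=b/\gamma$, admissible since $b<b/\gamma\le c$. For (C1) the set $\{u\in P(\psi,b,d):\psi(u)>b\}$ is nonempty: any constant function $u_0\equiv k$ with $b<k<b/\gamma$ lies in $P$, has $\psi(u_0)=k>b$ and $\|u_0\|=k\le d$. If $u\in P(\psi,b,d)$ then $b\le\psi(u)\le u(t)\le\|u\|\le b/\gamma$ for $t\in[\eta,T]$, so (D2) gives $f(t,u(t))\ge b/\delta$ there; retaining only the integrals over $[\eta,T]$ (whose coefficients have the correct sign for $t\in[\eta,T]$ by (H2)) and using $\int_\eta^T(T-s)\,ds=(T-\eta)^2/2$, the constant $\delta$ of \eqref{eq-20} is exactly what makes $\min_{t\in[\eta,T]}Au(t)\ge\delta\cdot(b/\delta)=b$, hence $\psi(Au)>b$. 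Condition (C3) is the clean part: if $u\in P(\psi,b,c)$ with $\|Au\|>d=b/\gamma$, then since $Au\in P$ we get $\psi(Au)=\min_{t\in[\eta,T]}Au(t)\ge\gamma\|Au\|>\gamma\cdot(b/\gamma)=b$.

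With (C1)--(C3) in hand, Theorem \ref{theo 1.1} yields three fixed points $u_1,u_2,u_3\in\overline{P}_c$ with $\|u_1\|<a$, $\psi(u_2)>b$, and $a<\|u_3\|$ with $\psi(u_3)<b$, which are the asserted positive solutions. The main obstacle is the pair of kernel estimates behind $m$ and $\delta$: computing $\max_{t\in[0,T]}$ of the sum of the kernel integrals to confirm it equals $1/m$, and --- harder --- the lower bound for $\min_{t\in[\eta,T]}Au(t)$, where the bound $f\ge b/\delta$ holds only on $[\eta,T]$ while the representation in Lemma \ref{lem 3.1} also contains the subtracted term $-\int_0^t(t-s)f\,ds$ and integrals over $[0,\eta]$. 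There one must track the signs of the $t$-dependent coefficients (using $0<\alpha<2T/\eta^2$ and the bound on $\beta$) to be sure the discarded contributions are nonnegative and that the retained ones deliver exactly the factor $\delta$.
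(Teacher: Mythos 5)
Your overall route is the paper's route: the same operator $A$ built from Lemma \ref{lem 3.1}, the same choice $d=b/\gamma$, the same roles for $m$ and $\delta$ (namely $\|Au\|\le \frac{1}{m}\max f$ for the invariance of $\overline{P}_c$ and of $P_a$, and the retained coefficient of $\int_\eta^T(T-s)f\,ds$ times $(T-\eta)^2/2$ producing exactly the two quantities in the minimum defining $\delta$), and Lemmas \ref{lem 3.2}--\ref{lem 3.3} to place $Au$ in the cone. The one genuine deviation is your choice of cone and functional: you take $P=\{u\ge 0,\ \min_{[\eta,T]}u\ge\gamma\|u\|\}$ and $\psi(u)=\min_{t\in[\eta,T]}u(t)$, whereas the paper takes the cone of nonnegative concave functions and $\psi(u)=\min_{t\in[0,T]}u(t)$. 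Your choice makes condition (C3) genuinely cleaner --- $\psi(Au)\ge\gamma\|Au\|$ is built into the cone, while the paper asserts $\min_{[0,T]}Au\ge\gamma\|Au\|$ by citing Lemma \ref{lem 3.3}, which only controls the minimum over $[\eta,T]$ --- and it costs nothing in (C1), since for $u\in P(\psi,b,d)$ you still get $u(t)\in[b,b/\gamma]$ on $[\eta,T]$, which is all (D2) needs.

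The price of that choice is the one point you should fix: Leggett--Williams then delivers $\psi(u_2)=\min_{t\in[\eta,T]}u_2(t)>b$, which is strictly weaker than the stated conclusion $\min_{t\in[0,T]}u_2(t)>b$ (a fixed point has $u_2(0)=\beta u_2(\eta)$, which can fall below $b$ when $\beta<1$, so the extension to $[0,T]$ is not automatic). The paper avoids this by working with $\min_{[0,T]}$ throughout and verifying $\psi(Au)>b$ at both endpoints $t=0$ and $t=T$ of the concave function $Au$ --- two long but explicit rearrangements in which the integrals over $[0,\eta]$ are shown to contribute nonnegatively and the surviving terms are $\frac{2\beta\eta}{\Lambda}\int_\eta^T(T-s)f\,ds$ and $\frac{\alpha\eta^2(\beta+1)}{\Lambda}\int_\eta^T(T-s)f\,ds$ respectively. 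That computation is precisely the ``main obstacle'' you flag but do not carry out; you have located the mechanism correctly (drop the nonnegative $[0,\eta]$ contributions after tracking signs via (H2), keep the $[\eta,T]$ term, and read off $\delta$), so either execute it at the endpoints relevant to your $\psi$ or, better, switch to the paper's $\psi=\min_{[0,T]}$ so that the conclusion you obtain matches the theorem as stated.
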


\begin{proof}

Let $E = C([0, T], \mathbb{R})$ be endowed with the maximum norm, $\|u\|=\max_{t\in[0, T]} u(t)$, define the cone $P\subset C([0, T], \mathbb{R})$ by
\begin{equation}
P=\{u\in C([0, T], \mathbb{R}) : u \text{ concave down and }
u(t)\ge 0 \text{ on } [0, T]\}.  \label{eq-21}
\end{equation}
Let $\psi:P\to [0,\infty)$ be defined by
\begin{equation}
\psi(u)=\min_{t\in[0, T]} u(t),\quad u\in P. \label{eq-22}
\end{equation}
then $\psi$ is a nonnegative continuous concave functional and
$\psi(u)\le\|u\|, u\in P$.

Define the operator $A: P\to C([0, T], \mathbb{R})$ by

\begin{eqnarray*}
Au(t)&=&-\frac{\beta (2T-\alpha \eta ^{2})-2\beta (1-\alpha \eta
)t}{\Lambda}\int_{0}^{\eta }(\eta -s)f(s, u(s))ds \\
&&-\frac{\alpha \beta \eta -\alpha (\beta -1)t}{\Lambda}\int_{0}^{\eta }(\eta -s)^{2}f(s, u(s))ds \\
&&-\frac{2(\beta-1)t-2\beta \eta }{\Lambda}\int_{0}^{T}(T-s)f(s, u(s))ds \\
&&-\int_{0}^{t}(t-s)f(s, u(s))ds.
\end{eqnarray*}

Then the fixed points of $A$ just are the solutions of the
boundary-value problem \eqref{eq-4}-\eqref{eq-5} from Lemma
\ref{lem 3.1}. Since $(Au)^{\prime \prime }(t)=-f(t,  u(t))$ for $t\in
(0, T)$, together with {\rm (H1)} and Lemma \ref{lem 3.2}, we see that
$Au(t)\ge 0,\ t\in [0, T]$ and $(Au)^{\prime \prime }(t)\le 0,\
t\in(0, T)$. Thus $A:P\to P$. Moreover, $A$ is completely
continuous.

We now show that all the conditions of Theorem \ref{theo 1.1} are satisfied. From
\eqref{eq-22}, we know that $\psi(u)\le\|u\|$, for all $u\in P$.

Now if $u\in\overline {P_c}$, then $0\le u\le c$, together with {\rm (D3)},
we find $\forall\ t\in[0, T]$,

\begin{eqnarray*}
Au(t)&\leq&\frac{2\beta (1-\alpha \eta
)t-\beta (2T-\alpha \eta ^{2})}{\Lambda}\int_{0}^{\eta }(\eta -s)f(s, u(s))ds \\
&&+\frac{\alpha (\beta -1)t-\alpha \beta \eta}{\Lambda}\int_{0}^{\eta }(\eta -s)^{2}f(s, u(s))ds \\
&&+\frac{2\beta \eta-2(\beta-1)t }{\Lambda}\int_{0}^{T}(T-s)f(s, u(s))ds \\
&\leq&\frac{2\beta T+\alpha\beta \eta ^{2}}{\Lambda}\int_{0}^{\eta }(\eta -s)f(s, u(s))ds +\frac{\alpha\beta T}{\Lambda}\int_{0}^{\eta }(\eta -s)^{2}f(s, u(s))ds \\
&&+\frac{2\beta \eta+2T}{\Lambda}\int_{0}^{T}(T-s)f(s, u(s))ds \\
&\leq&\frac{2T(\beta+1)+\beta\eta(\alpha\eta+2)}{\Lambda}
\int_{0}^{T}(T-s)f(s, u(s))ds \\
&&+\frac{\alpha\beta T}{\Lambda}\int_{0}^{\eta}(\eta -s)^{2}f(s, u(s))ds \\
&\leq&\frac{2T(\beta+1)+\beta\eta(\alpha\eta+2)}{\Lambda}
\int_{0}^{T}(T-s)f(s, u(s))ds \\
&&+\frac{\alpha\beta T}{\Lambda}\int_{0}^{T}T(T-s)f(s, u(s))ds \\
&=&\frac{2(\beta+1)+T^{-1}\beta\eta(\alpha\eta+2)+\alpha\beta
T}{\Lambda}\int_{0}^{T}T(T-s)f(s, u(s))ds \\
&\leq& mc \frac{2(\beta+1)+T^{-1}\beta\eta(\alpha\eta+2)+\alpha\beta
T}{\Lambda}\int_{0}^{T}T(T-s)ds\\
&=& mc \frac{T^{2}(2T(\beta+1)+\beta\eta(\alpha
\eta+2)+\alpha\beta T^{2})}{2\Lambda}\\
&=& c
\end{eqnarray*}

Thus, $A:\overline {P_c}\to \overline {P_c}$.

By {\rm (D1)} and the argument above, we can get that
$A:\overline{P_a}\to P_a$. So, $\|Au\|<a$ for $\|u\|\le a$, the condition {\rm (C2)}
of Theorem \ref{theo 1.1} holds.

Consider the condition {\rm (C1)} of Theorem \ref{theo 1.1} now. Since
$\psi(b/\gamma)=b/\gamma>b$, let $d=b/\gamma$, then $\{u\in
P(\psi, b, d):\psi(u)>b\}\neq\emptyset$. For $u\in
P(\psi, b, d)$, we have $b\le u(t)\le b/\gamma,\ t\in[0, T]$.
Combining with {\rm (D2)}, we get
$$
f(t, u)\ge\frac b\delta,\quad t\in[\eta, T].
$$
Since $u\in P(\psi, b, d)$, then there are two cases, (i) $\psi(Au)(t)=Au(0)$ and (ii) $\psi(Au)(t)=Au(T)$.
In case (i), we have

\begin{eqnarray*}
\psi(Au)(t)&=& Au(0)\\
&=&-\frac{\beta (2T-\alpha \eta ^{2})}{\Lambda}\int_{0}^{\eta }(\eta -s)f(s, u(s))ds\\
&&-\frac{\alpha \beta \eta }{\Lambda}\int_{0}^{\eta }(\eta -s)^{2}f(s, u(s))ds +\frac{2\beta \eta }{\Lambda}\int_{0}^{T}(T-s)f(s, u(s))ds \\
&=&\frac{2\beta\eta}{\Lambda}\int_{0}^{T}(T-s)f(s, u(s))ds-\frac{\beta\eta(2T-\alpha\eta^{2)} }{\Lambda}\int_{0}^{\eta }f(s, u(s))ds \\
&&+\frac{\beta(2T-\alpha\eta^{2)}}{\Lambda}\int_{0}^{\eta}sf(s, u(s))ds\\
&&-\frac{\alpha\beta\eta }{\Lambda}\int_{0}^{\eta}(\eta^{2}-2\eta s+s^{2})f(s, u(s))ds\\
&=&\frac{2\beta\eta}{\Lambda}\int_{0}^{T}(T-s)f(s, u(s))ds-\frac{\alpha\beta\eta}{\Lambda}\int_{0}^{\eta }s^{2}f(s, u(s))ds \\
&&-\frac{2T\beta\eta}{\Lambda}\int_{0}^{\eta}f(s, u(s))ds +\frac{2T\beta+\alpha\eta^{2}\beta}{\Lambda}\int_{0}^{\eta}sf(s, u(s))ds \\
&=&\frac{2T\beta\eta}{\Lambda}\int_{\eta}^{T}f(s, u(s))ds-\frac{2\beta\eta}{\Lambda}\int_{0}^{T }sf(s, u(s))ds \\
&&-\frac{\alpha\beta\eta}{\Lambda}\int_{0}^{\eta}s^{2}f(s, u(s))ds +\frac{2T\beta+\alpha\eta^{2}\beta}{\Lambda}\int_{0}^{\eta}sf(s, u(s))ds \\
&=&\frac{2\beta\eta}{\Lambda}\int_{\eta}^{T}(T-s)f(s, u(s))ds+\frac{2\beta(T-\eta)+\alpha\eta^{2}\beta}{\Lambda}\int_{0}^{\eta }sf(s, u(s))ds \\
&&-\frac{\alpha\beta\eta}{\Lambda}\int_{0}^{\eta}s^{2}f(s, u(s))ds\\
&>& \frac{2\beta\eta}{\Lambda}\int_{\eta}^{T}(T-s)f(s, u(s))ds+\frac{\alpha\eta^{2}\beta}{\Lambda}\int_{0}^{\eta}sf(s, u(s))ds \\
&&-\frac{\alpha\beta\eta}{\Lambda}\int_{0}^{\eta}s^{2}f(s, u(s))ds\\
&=& \frac{2\beta\eta}{\Lambda}\int_{\eta}^{T}(T-s)f(s, u(s))ds+\frac{\alpha\beta\eta}{\Lambda}\int_{0}^{\eta}s(\eta-s)f(s, u(s))ds\\
&>& \frac{2\beta\eta}{\Lambda}\int_{\eta}^{T}(T-s)f(s, u(s))ds\\
&\geq& \frac{b}{\delta}\frac{2\beta\eta}{\Lambda}\int_{\eta}^{T}(T-s)ds\\
&=& \frac{b}{\delta}\frac{\beta\eta(T-\eta)^{2}}{\Lambda}\\
&\geq& b.
\end{eqnarray*}

In case (ii), we have
\begin{eqnarray*}
\psi(Au)(t)&=& Au(T)\\
&=&-\frac{\beta (2T-\alpha \eta ^{2})-2\beta (1-\alpha \eta
)T}{\Lambda}\int_{0}^{\eta }(\eta -s)f(s, u(s))ds \\
&&-\frac{\alpha \beta \eta -\alpha (\beta -1)T}{\Lambda}\int_{0}^{\eta}(\eta -s)^{2}f(s, u(s))ds \\
&&-\frac{2(\beta-1)T-2\beta \eta }{\Lambda}\int_{0}^{T}(T-s)f(s, u(s))ds \\
&&-\int_{0}^{T}(T-s)f(s, u(s))ds\\
&=&\frac{\alpha\beta\eta(\eta-2T)}{\Lambda}\int_{0}^{\eta}(\eta -s)f(s, u(s))ds \\
&&+\frac{\alpha(\beta-1)T-\alpha\beta\eta}{\Lambda}\int_{0}^{\eta}(\eta -s)^{2}f(s, u(s))ds \\
&&+\frac{\alpha\eta^{2}(\beta+1)}{\Lambda}\int_{0}^{T}(T-s)f(s, u(s))ds \\
&=&\frac{\alpha\eta^{2}(\beta+1)}{\Lambda}\int_{0}^{T}(T-s)f(s, u(s))ds
-\frac{\alpha\eta^{2}T(\beta+1)}{\Lambda}\int_{0}^{\eta}f(s, u(s))ds \\
&&+\frac{\alpha\eta(\beta\eta+2T)}{\Lambda}\int_{0}^{\eta}sf(s, u(s))ds\\
&&+\frac{\alpha(\beta-1)T-\alpha\beta\eta}{\Lambda}\int_{0}^{\eta}s^{2}f(s, u(s))ds\\
&=&\frac{\alpha\eta^{2}T(\beta+1)}{\Lambda}\int_{\eta}^{T}f(s, u(s))ds
-\frac{\alpha\eta^{2}(\beta+1)}{\Lambda}\int_{0}^{\eta}sf(s, u(s))ds \\
&&-\frac{\alpha\eta^{2}(\beta+1)}{\Lambda}\int_{\eta}^{T}sf(s, u(s))ds
+\frac{\alpha\eta(\beta\eta+2T)}{\Lambda}\int_{0}^{\eta}sf(s, u(s))ds\\
&&+\frac{\alpha(\beta-1)T-\alpha\beta\eta}{\Lambda}\int_{0}^{\eta}s^{2}f(s, u(s))ds\\
&=&\frac{\alpha\eta^{2}(\beta+1)}{\Lambda}\int_{\eta}^{T}(T-s)f(s, u(s))ds
+\frac{\alpha\eta(2T-\eta)}{\Lambda}\int_{0}^{\eta}sf(s, u(s))ds \\
&&+\frac{\alpha\beta(T-\eta)-\alpha T}{\Lambda}\int_{0}^{\eta}s^{2}f(s, u(s))ds\\
&>&\frac{\alpha\eta^{2}(\beta+1)}{\Lambda}\int_{\eta}^{T}(T-s)f(s, u(s))ds+
\frac{\alpha\eta T}{\Lambda}\int_{0}^{\eta}sf(s,u(s))ds\\
&&-\frac{\alpha T}{\Lambda}\int_{0}^{\eta}s^{2}f(s, u(s))ds\\
&=&\frac{\alpha\eta^{2}(\beta+1)}{\Lambda}\int_{\eta}^{T}(T-s)f(s, u(s))ds+
\frac{\alpha T}{\Lambda}\int_{0}^{\eta}s(\eta-s)f(s, u(s))ds\\
&>&\frac{\alpha\eta^{2}(\beta+1)}{\Lambda}\int_{\eta}^{T}(T-s)f(s, u(s))ds\\
&\geq&\frac{b}{\delta}\frac{\alpha\eta^{2}(\beta+1)}{\Lambda}\int_{\eta}^{T}(T-s)ds\\
&=&\frac{b}{\delta}\frac{\alpha\eta^{2}(\beta+1)(T-\eta)^{2}}{2\Lambda}\\
&\geq& b.
\end{eqnarray*}
So,  $\psi(Au)>b$ ; $ \forall$ $u\in P(\psi, b, b/\gamma)$.

For the condition {\rm (C3)} of the Theorem \ref{theo 1.1}, we can verify
it easily under our assumptions using Lemma \ref{lem 3.3}. Here
$$
\psi(Au)=\min_{t\in [0, T]} Au(t) \ge \gamma\|Au\|
        > \gamma \frac b\gamma = b
$$
as long as $u\in P(\psi, b, c)$ with $\|Au\|>b/\gamma$.

Since all conditions of Theorem \ref{theo 1.1} are satisfied. Then problem \eqref{eq-4}-\eqref{eq-5} has at least three positive solutions
$u_1$,\ $u_2$,\ $u_3$ with
$$
\|u_1\|<a,\quad\psi(u_2)>b,\quad a<\|u_3\|\quad\text{with }\psi(u_3)<b.
$$
\end{proof}

\section{Some examples}
In this section, in order to illustrate our result, we consider some examples.

\begin{example}
Consider the boundary value problem

\begin{equation}\label{eq-37}
{u^{\prime \prime }}(t)+\frac{40u^{2}}{u^{2}+1}=0, \  \ 0<t<1,
\end{equation}

\begin{equation}\label{eq-38}
u(0)=\frac{1}{2}u(\frac{1}{3}), \  \ u(1)=3 \int_{0}^{\frac{1}{3}}u(s)ds.
\end{equation}

Set $\beta=1/2$, $\alpha=3$, $\eta=1/3$, $T=1$, and

\[f(t, u)=f(u)=\frac{40u^{2}}{u^{2}+1}, \ \ u\geq0.\]

It is clear that $f(.)$ is continuous and increasing on $[0, \infty)$. We can also seen that

\[0<\alpha=3<18=\frac{2T}{\eta^{2}}, \ \ 0<\beta=\frac{1}{2}<1=\frac{2T-\alpha \eta ^{2}}{\alpha \eta ^{2}-2\eta
+2T}.\]

Now we check that {\rm (D1)}, {\rm (D2)} and {\rm (D3)} of Theorem \ref{threesolution} are satisfied.
By \eqref{eq-17}, \eqref{eq-19}, \eqref{eq-20}, we get $\gamma=1/4$, $m=1/3$, $\delta=4/45$. Let $c=124$, we have
\[f(u)\leq40<mc=\frac{124}{3}\approx41,33, \ u\in[0, c], \]

from $\lim_{u\rightarrow\infty}f(u)=40$, so that {\rm (D3)} is met. Note that $f(2)=32$, when we set $b=2$,

\[f(u)\geq\frac{b}{\delta}=22.5, \ u\in[b, 4b],\]
holds. It means that {\rm (D2)}is satisfied. To verify {\rm (D1)}, as $f(\frac{1}{120})=\frac{40}{14401}$, we take $a=\frac{1}{120}$, then

\[f(u)<ma=\frac{1}{360}, \ u\in[0, a],\]
and {\rm (D1)} holds.
Summing up, there exists constants $a=1/120$, $b=2$, $c=124$ satisfying

\[0<a<b<\frac{b}{\gamma}\leq c,\]
such that {\rm (D1)}, {\rm (D2)} and {\rm (D3)} of Theorem \ref{threesolution} hold. So the boundary-value problem \eqref{eq-37}-\eqref{eq-38}  has at least three positive solutions $u_{1}$, $u_{2}$ and $u_{3}$ satisfying

\[\|u_{1}\|<\frac{1}{120}, \ \  \min_{t\in[0, T]}u_{2}(t)>2, \ \ \frac{1}{120}<\|u_{3}\| \ \ \text{with} \ \min_{t\in[0, T]}u_{3}(t)<2 .\]

\end{example}

\begin{example}
Consider the boundary value problem

\begin{equation}\label{eq-39}
{u^{\prime \prime }}(t)+f(t, u)=0, \  \ 0<t<1,
\end{equation}

\begin{equation}\label{eq-40}
u(0)=u(\frac{1}{2}), \  \ u(1)= \int_{0}^{\frac{1}{2}}u(s)ds.
\end{equation}

Set $\beta=1$, $\alpha=1$, $\eta=1/2$, $T=1$, $f(t, u)=e^{-t}h(u)$ where

 \begin{equation}\label{green}
h(u) = \begin{cases} \frac{2}{25}u  &  0\leq u\leq 1  \\
\frac{2173}{75}u-\frac{2167}{75} &  1\leq u\leq 4 \\
 87   &   4\leq u\leq 544 \\
\frac{87}{544}u   &  544\leq u\leq 546 \\
\frac{39(3u+189)}{u+270} &  u\geq 546.
\end{cases}
\end{equation}

By \eqref{eq-17}, \eqref{eq-19}, \eqref{eq-20} and after a simple calculation, we get $\gamma=1/4$, $m=4/25$, $\delta=1/8$.

We choose $a = 1/4$, $b = 4$, and $c = 544$; consequently,

\begin{eqnarray*}
f(t, u)&=&e^{-t}\frac{2}{25}u\leq \frac{2}{25}u<\frac{4}{25}\times\frac{1}{4}=ma, \ \  0\leq t\leq 1, \ \ 0\leq u\leq 1/4,\\
f(t, u)&=&e^{-t}87\geq \frac{87}{e}>32=\frac{b}{\delta}, \ \  1/2\leq t\leq 1, \ \ 4\leq u\leq 16,\\
f(t, u)&=&e^{-t}h(u)\leq 87<\frac{4}{25}\times544=mc, \ \  0\leq t\leq 1, \ \ 0\leq u\leq 544.
\end{eqnarray*}

That is to say, all the conditions of Theorem \ref{threesolution} are satisfied. Then problem
\eqref{eq-39}, \eqref{eq-40} has at least three positive solutions $u_{1}$ $u_{2}$, and $u_{3}$ satisfying

$$
\|u_1\|<\frac{1}{4},\quad\psi(u_2)>4,\quad \|u_3\|>\frac{1}{4} \ \ \text{with}\ \ \psi(u_3)<4.
$$

\end{example}

\end{document}